\def\R{{\mathbb R}}
\def\Z{{\mathbb Z}}
\def\N{{\mathbb N}}
\def\squareforqed{\hbox{\rlap{$\sqcap$}$\sqcup$}}
\def\qed{\ifmmode\squareforqed\else{\unskip\nobreak\hfil
\penalty50\hskip1em\null\nobreak\hfil\squareforqed
\parfillskip=0pt\finalhyphendemerits=0\endgraf}\fi}
\newtheorem{thm}{Theorem}[section]
\newtheorem{cor}[thm]{Corollary}
\newtheorem{lem}[thm]{Lemma}
\begin{document}
\title{Two-dimensional Shannon type expansions via one-dimensional affine and wavelet lattice actions}
\author{K. Nowak}
\address{\textnormal{Department of Computer Science,
Drexel University, 3141 Chestnut Street, Philadelphia, PA 19104, USA} \newline
\textnormal{e-mail: kn33@drexel.edu}}
\author{M. Pap}
\address{\textnormal{Faculty of Sciences, University of P\'ecs, 7634 P\'ecs, Ifj\'us\'ag \'ut 6, HUNGARY} \newline
\textnormal{e-mail: papm@gamma.ttk.pte.hu}}

\begin{abstract}
It is rather unexpected, but true, that it is possible to construct reproducing formulae and orthonormal bases of $L^2 (\R^2)$ just by applying the standard one dimensional wavelet action of translations and dilations to the first variable $x_1$ of the generating function $\psi(x_1,x_2)$, $\psi \in L^2 (\R^2)$, i.e., by making use of building blocks
$$\psi_{u,s}(x_1,x_2)=s^{-1/2}\psi\left(\frac{x_1-u}{s},x_2\right), \text{where } u\in \R, s>0,$$ 
in the case of reproducing formulae, and 
$$\psi_{k,m}(x_1,x_2)=2^{-k/2} \psi\left(\frac{x_1-2^k m}{2^k},x_2 \right),  \text{where } k,m\in \Z,$$
in the case of orthonormal bases. It is possible to compensate the fact, that the second variable $x_2$ is not acted upon, by a careful selection of the generating function $\psi$. Shannon wavelet  tiling of the time-frequency plane $\R^2$, a standard illustration of orthogonality and completeness phenomena corresponding to the Shannon wavelet,
$$
\chi_{(2^km,2^k(m+1)]}(x)
\chi_{2^{-k}I}(\xi),\,  k,m\in \Z, \,I=- (1/2,1]\cup (1/2,1],
$$
with $x$ representing time and $\xi$ frequency,
is substituted by a phase space tiling of $\R^4$ with unbounded, hyperboloid type blocks of the form
$$
\chi_{(2^km,2^k(m+1)]}(x_1)\sum_{n,l}\chi_{2^{-k}I_{D(n,l)}}(\xi_1)
\chi_{(n,n+1]}(x_2)\chi_{(l,l+1]}(\xi_2),\, k,m\in \Z
$$
where $I_r=2^{-r}I$, $ r\ge 1$, and $D:\Z \times \Z \rightarrow \N$ is a bijection, an 
additional parameter of the generating function, needed for the lift from $L^2(\R)$ to $L^2(\R^2)$. Variables $x_1,x_2$ are coordinates of position and variables $\xi_1,\xi_2$ of momentum.
\end{abstract}

\maketitle
\markboth{Shannon type expansions}{K. Nowak, M. Pap}

\section{Main Results and their Context}
We begin by introducing the $L^2(\R)$ background results. The one-dimensional {\it Calder\'on reproducing formula} has the following form: for a function $\psi \in L^2(\R)$ satisfying the {\it admissibility condition}, i.e.,
\begin{equation}
\int_0^\infty |\hat{\psi}(s)|^2\frac{ds}{s}=
\int_0^\infty |\hat{\psi}(-s)|^2\frac{ds}{s}=1,
\label{calderon_admissibility_condition}
\end{equation} 
where $\hat{\psi}(\xi)=\int_\R \psi(x)e^{-2\pi ix\xi}\,dx $
is the Fourier transform of $\psi$, we have    
\begin{equation}
f=\int_{\R^2_+} \langle f,\psi_{u,s}\rangle_{L^2(\R)} \psi_{u,s}\,\frac{du\,ds}{s^2}
\label{calderon_reproducing_formula}\end{equation}
for all $f\in L^2(\R)$,
where $R^2_+=\R \times (0,\infty)$, $\psi_{u,s}(x)=s^{-\frac{1}{2}}\psi(\frac{x-u}{s})$, $u\in \R,\,s>0$, and the convergence of the integral in (\ref{calderon_reproducing_formula}) is understood in the weak sense. Function $\psi$ is called the {\it generating function} of the Calder\'on reproducing formula (\ref{calderon_reproducing_formula}). The admissibility condition
(\ref{calderon_admissibility_condition}) is necessary and sufficient for the formula (\ref{calderon_reproducing_formula}) to hold. If 
(\ref{calderon_reproducing_formula}) holds we call the system $\{\psi_{u,s}\}_{u\in \R,s>0}$, together with its parameter measure $\frac{du\,ds}{s^2}$, {\it reproducing} in $L^2(\R)$.

The one-dimensional {\it Shannon wavelet system} is defined as
\begin{equation}
\psi_{k,m}^S(x)=2^{-k/2}\psi^S  \left(\frac{x-2^km}{2^k}\right),
\label{shannon_wavelet}
\end{equation}
where $k,m\in \Z$, and the Fourier transform $\widehat{\psi^S}$ of $\psi^S$ has the form $\widehat{\psi^S}=\chi_{[-1,-1/2)\cup (1/2,1]}$. The system $\left\{ \psi_{k,m}^S\right\}_{k,m\in \Z}$ is an orthonormal basis of $L^2(\R)$, because on the Fourier transform side it represents a family of the standard trigonometric systems adapted to the dyadic partition of the real line $\R$, i.e.
\begin{equation}
\widehat{\psi_{k,m}^S}(\xi)=2^{k/2}\chi_{[-1,-1/2)\cup (1/2,1]}
(2^k \xi)e^{2\pi im2^k \xi}.
\label{shannon_wavelet_fourier_side} 
\end{equation}

We move now to the $L^2(\R^2)$ context. We introduce $e_{k,l} (y)=\chi_{(k,k+1]}(y)\,e^{2\pi ily}$, with $k,l\in \Z$, and $f_m (s)=\chi_{(2^{-m-1},2^{-m} ]} (|s|)$, with $m \ge 1, m\in \Z$. The system $\left\{e_{k,l} \right\}_{k,l\in \Z}$ is an orthonormal basis of $L^2 (\R)$, and $\left\{c_f^{-1} f_m \right\}_{m\ge 1}$, where $c_f=(2 \log 2)^{1/2}$, is an orthonormal system on 
$L^2 (\R,\frac{ds}{|s|})$. Let $D:\Z \times \Z \rightarrow \N$ be a bijection. Define the generating function $\psi^D \in L^2 (\R^2)$ by requesting that 
\begin{equation}
\psi^D(\hat{s},y)=\sum_{k,l\in \Z} f_{D(k,l)}(s)e_{k,l} (y),
\label{2d_generating_function}
\end{equation}
where the symbol $\hat{\text{ }}$ over an indicated variable means that the Fourier transform was applied to it. 
Function $\psi ^D$ defined in (\ref{2d_generating_function})
is our principal object of interest. It is the generating function of the reproducing system
\begin{equation}
d_f^{-1}\psi_{u,s}^D(x_1,x_2)=d_f^{-1}s^{-\frac{1}{2}}\psi^D\left(\frac{x_1-u}{s},x_2\right)\!, u\in \R,\,s>0, 
\label{2d_reproducing_system}
\end{equation}
where $d_f=(\log 2)^{1/2}$, and of the orthonormal basis
\begin{equation}
\psi_{k,m}^D(x_1,x_2)=2^{-k/2}\psi ^D\left(\frac{x_1-2^km}{2^k},x_2\right)\!, k,m\in \Z.
\label{2d_orthonormal_basis}
\end{equation}
Both systems (\ref{2d_reproducing_system}) and (\ref{2d_orthonormal_basis}) are obtained by an application of the standard one-dimensional wavelet action defined in (\ref{calderon_reproducing_formula}) and (\ref{shannon_wavelet}) to the first coordinate $x_1$.

We are ready to formulate our principal results.
\begin{thm}\label{Reproducing_Formula_iff_1d_Isometry}
Let us consider $\psi\in L^2(\R^2)$. The system
$$
\psi_{u,s}(x_1,x_2)=s^{-\frac{1}{2}}\psi\left(\frac{x_1-u}{s},x_2\right)\!, u\in \R,\,s>0, 
$$
with the parameter measure $\frac{du\,ds}{s^2}$, is reproducing in $L^2(\R^2)$ if and only if the maps
$$
f\mapsto \int_\R \overline{\psi\left(\widehat{\pm s},y\right)} f(y)\,dy,
$$
from $L^2(\R)$ into $L^2(\R_+,\frac{ds}{s})$, preserve inner products.
\end{thm}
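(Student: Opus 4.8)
The plan is to pass to the partial Fourier transform in the first variable, where the wavelet action on $x_1$ becomes modulation and dilation, and then to reduce the four–fold weak integral defining the reproducing property to the two one–dimensional isometry conditions. Write $\Psi(\eta,x_2)=\psi(\widehat{\eta},x_2)$ for the Fourier transform of $\psi$ taken in the first variable, and for $f\in L^2(\R^2)$ let $F(\eta,x_2)$ denote the analogous partial transform; every element of $L^2(\R^2)$ arises in this way. A direct change of variables in $x_1$ gives
$$
\psi_{u,s}(\widehat{\eta},x_2)=s^{1/2}e^{-2\pi i u\eta}\,\Psi(s\eta,x_2),
$$
so that, by Plancherel in $x_1$,
$$
\langle f,\psi_{u,s}\rangle_{L^2(\R^2)}=s^{1/2}\int_\R\Big(\int_\R F(\eta,x_2)\overline{\Psi(s\eta,x_2)}\,dx_2\Big)e^{2\pi i u\eta}\,d\eta.
$$
The reproducing formula (\ref{calderon_reproducing_formula}) in $L^2(\R^2)$ asserts $\langle Tf,g\rangle=\langle f,g\rangle$ for all $f,g$, where the reproducing operator $T$ is given by $\langle Tf,g\rangle=\int_{\R^2_+}\langle f,\psi_{u,s}\rangle\overline{\langle g,\psi_{u,s}\rangle}\,\frac{du\,ds}{s^2}$, so it suffices to evaluate this bilinear form.

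Next I would carry out the inner integrations in the order $u$, then $s$. For fixed $s$ the $u$–integral pairs two inverse Fourier transforms in $\eta$, so Parseval in $u$ eliminates $u$ and contributes a factor $s$: with $a_s(\eta)=\int_\R F(\eta,x_2)\overline{\Psi(s\eta,x_2)}\,dx_2$ and $b_s$ defined from $g$ in the same way,
$$
\int_\R\langle f,\psi_{u,s}\rangle\overline{\langle g,\psi_{u,s}\rangle}\,du=s\int_\R a_s(\eta)\overline{b_s(\eta)}\,d\eta.
$$
Integrating against $\frac{ds}{s^2}$ turns the weight into $\frac{ds}{s}$; I would then split the $\eta$–integral into $\eta>0$ and $\eta<0$ and substitute $t=s|\eta|$, which removes $\eta$ from the argument of $\Psi$ and leaves the dilation–invariant measure $\frac{dt}{t}$ unchanged. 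Denoting by $A_\pm$ the maps $h\mapsto\big(t\mapsto\int_\R\overline{\Psi(\pm t,x_2)}h(x_2)\,dx_2\big)$ of the statement, the two pieces are precisely $\langle A_+F(\eta,\cdot),A_+G(\eta,\cdot)\rangle$ and $\langle A_-F(\eta,\cdot),A_-G(\eta,\cdot)\rangle$ in $L^2(\R_+,\frac{dt}{t})$, yielding the key identity
$$
\langle Tf,g\rangle=\int_{\eta>0}\!\langle A_+F(\eta,\cdot),A_+G(\eta,\cdot)\rangle\,d\eta+\int_{\eta<0}\!\langle A_-F(\eta,\cdot),A_-G(\eta,\cdot)\rangle\,d\eta.
$$

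Both implications then fall out of this identity together with Plancherel in $x_1$, which gives $\langle f,g\rangle=\int_\R\langle F(\eta,\cdot),G(\eta,\cdot)\rangle\,d\eta$. If $A_+$ and $A_-$ preserve inner products, each integrand above may be replaced by $\langle F(\eta,\cdot),G(\eta,\cdot)\rangle$, and one reads off $\langle Tf,g\rangle=\langle f,g\rangle$, i.e.\ the system is reproducing. Conversely, assuming $T=\mathrm{Id}$, the difference of the two displayed expressions vanishes for all $f,g$; testing with separable functions $F(\eta,x_2)=\alpha(\eta)\phi(x_2)$ and $G(\eta,x_2)=\beta(\eta)\chi(x_2)$ whose $\eta$–factors are supported in $(0,\infty)$ (resp.\ $(-\infty,0)$) isolates the sesquilinear form $\langle A_\pm\phi,A_\pm\chi\rangle-\langle\phi,\chi\rangle$, which, with $\int\alpha\overline{\beta}\neq0$, is forced to vanish for all $\phi,\chi$; thus $A_\pm$ preserve inner products. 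The main obstacle is analytic rather than algebraic: the defining integral converges only weakly, so the Parseval step in $u$ and the Fubini interchanges are not a priori licensed. I would handle this in the standard way, first proving the key identity on the dense class of $f,g$ whose partial transforms $F,G$ are smooth and compactly supported in $\eta$ away from $0$ and $\infty$ — where every integral converges absolutely and all interchanges are legitimate, and where finiteness of $\langle A_\pm\phi,A_\pm\chi\rangle$ in $L^2(\R_+,\frac{dt}{t})$ is obtained as a byproduct — and then extending to all of $L^2(\R^2)$ by density.
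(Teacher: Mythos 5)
Your route is, in substance, the paper's own proof of Theorem \ref{Reproducing_Formula_iff_1d_Isometry}: your separable $F=\alpha\otimes\phi$ are exactly the partial Fourier transforms of the tensor products $f_1\otimes f_2$ the paper tests on, your Parseval step in $u$ is the paper's Plancherel step, your substitution $t=s|\eta|$ is the paper's ``multiplicative invariance of $\frac{ds}{s}$,'' and your key identity is the paper's formula (\ref{reproduction_for_tensors}); both arguments then finish each implication by density. All of your algebraic computations check out.

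There is, however, one flawed justification, located exactly where the analytic content of the theorem lives. You claim that on the dense class of $F,G$ smooth and compactly supported in $\eta$ away from $0$ and $\infty$ ``every integral converges absolutely,'' so that the interchange of $d\eta$ with $\frac{ds}{s}$ is free. This is false for general $\psi\in L^2(\R^2)$: take $\Psi(\eta,x_2)=c(\eta)\phi(x_2)$ with $\|\phi\|_{L^2}=1$, $c\ge 0$ supported in $(0,1)$, $c\in L^2(dt)$ but $\int_0^1 c(t)^2\,\frac{dt}{t}=\infty$ (e.g.\ $c(t)^2=t^{-1}\log^{-2}(e/t)$); with $F=G=\alpha\otimes\phi$, $\alpha\in C_c^\infty((1,2))$, one gets $a_s(\eta)\overline{b_s(\eta)}=|\alpha(\eta)|^2c(s\eta)^2\ge 0$, whose double integral equals $\|\alpha\|_2^2\int_0^\infty c(t)^2\frac{dt}{t}=\infty$. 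Compact $\eta$-support cannot tame the $s$-integral, because the possible divergence of $\int_0^\infty\|\Psi(t,\cdot)\|_{L^2}^2\frac{dt}{t}$ --- precisely the admissibility quantity the theorem characterizes --- sits in the $s$ variable. Consequently your key identity cannot be established unconditionally in the form you state. The Fubini step must be licensed either by the hypothesis in force in the implication being proved (the reproducing property gives $\int_{\R^2_+}|\langle f,\psi_{u,s}\rangle|^2\frac{du\,ds}{s^2}=\|f\|^2<\infty$; the isometry hypothesis gives, by Tonelli, $\int_\R\int_0^\infty|a_s(\eta)|^2\,\frac{ds}{s}\,d\eta=\|F\|^2<\infty$, and Cauchy--Schwarz then yields absolute convergence), or, as the paper does, by polarization: write $a_s\overline{b_s}$ as a combination of the squares $|a_s+i^kb_s|^2$ and apply Tonelli to each nonnegative term, the diagonal identity holding in $[0,\infty]$. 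Relatedly, the finiteness of $\|A_\pm\phi\|_{L^2(\R_+,\frac{dt}{t})}$ is not a ``byproduct'' of the smoothness and support of the test functions (it fails in the example above); in the converse implication it must be extracted from the reproducing hypothesis via the diagonal identity. With this repair --- a one-line change, and exactly the paper's remark --- your argument is complete and coincides with the published proof.
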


\begin{cor}\label{2d_Reproducing_System}
The system $\left\{d_f^{-1}\psi_{u,s}^D\right\}_{u\in \R,s>0}$, defined in (\ref{2d_reproducing_system}), with the parameter measure $\frac{du\,ds}{s^2}$, is reproducing in $L^2(\R^2)$, i.e. for all 
$f\in L^2(\R^2)$
$$
f=d_f^{-2}\int_{\R^2_+} \langle f,\psi_{u,s}^D\rangle_{L^2(\R^2)} \psi_{u,s}^D\,\frac{du\,ds}{s^2},
$$
with the convergence of the integral understood in the week sense.
\end{cor}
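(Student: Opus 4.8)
The plan is to deduce the Corollary directly from Theorem~\ref{Reproducing_Formula_iff_1d_Isometry} applied to the generating function $\psi = d_f^{-1}\psi^D$. By that theorem it suffices to verify that the two maps $T_\pm : f \mapsto \int_\R \overline{d_f^{-1}\psi^D(\widehat{\pm s},y)}\,f(y)\,dy$, from $L^2(\R)$ into $L^2(\R_+,\frac{ds}{s})$, preserve inner products. Since each $T_\pm$ is linear in $f$, by polarization it is enough to show that each is an isometry, i.e. $\|T_\pm f\|_{L^2(\R_+,ds/s)} = \|f\|_{L^2(\R)}$ for every $f\in L^2(\R)$.

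First I would compute $T_\pm f$ explicitly by inserting the series expansion (\ref{2d_generating_function}). Interchanging summation and integration and recognizing the coefficients $c_{k,l}(f)=\langle f, e_{k,l}\rangle_{L^2(\R)}$ gives $T_\pm f(s) = d_f^{-1}\sum_{k,l\in\Z} f_{D(k,l)}(\pm s)\,c_{k,l}(f)$. Because $f_m(s)=\chi_{(2^{-m-1},2^{-m}]}(|s|)$ depends only on $|s|$, the factors $f_{D(k,l)}(\pm s)$ coincide for $s>0$, so the maps $T_+$ and $T_-$ are in fact equal on $\R_+$ and it suffices to treat one of them.

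Next I would evaluate the norm. The decisive observation is that the functions $f_m$ have pairwise disjoint supports (the dyadic annuli $2^{-m-1}<|s|\le 2^{-m}$), and since $D:\Z\times\Z\to\N$ is a bijection the indices $D(k,l)$ exhaust $\N$. Hence for each fixed $s>0$ at most one term of the series is nonzero, all cross terms in $|T_\pm f(s)|^2$ vanish, and using $f_m^2=f_m$ one gets $|T_\pm f(s)|^2 = d_f^{-2}\sum_{k,l} f_{D(k,l)}(s)\,|c_{k,l}(f)|^2$. Integrating against $\frac{ds}{s}$ and using $\int_0^\infty f_m(s)\,\frac{ds}{s}=\log 2$ for each $m\ge 1$, together with the normalization $d_f^2=\log 2$, yields $\|T_\pm f\|^2_{L^2(\R_+,ds/s)}=\sum_{k,l}|c_{k,l}(f)|^2$. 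Finally, since $\{e_{k,l}\}_{k,l\in\Z}$ is an orthonormal basis of $L^2(\R)$, Parseval's identity gives $\sum_{k,l}|c_{k,l}(f)|^2=\|f\|^2_{L^2(\R)}$, which is exactly the required isometry.

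The main obstacle is purely one of bookkeeping: justifying the term-by-term interchange of sum and integral and confirming that the cross terms genuinely disappear. Both points are settled by the disjointness of the supports of the $f_m$ and the orthonormality of the $e_{k,l}$, so that once the constants $c_f$, $d_f$ and the measure $\frac{ds}{s}$ are matched there is no analytic difficulty remaining, and an appeal to Theorem~\ref{Reproducing_Formula_iff_1d_Isometry} completes the proof.
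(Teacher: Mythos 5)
Your proposal is correct and follows essentially the same route as the paper: reduce via Theorem~\ref{Reproducing_Formula_iff_1d_Isometry} to the $\pm$ maps, expand using (\ref{2d_generating_function}) with the interchange justified by the disjoint supports of the $f_{D(k,l)}$ (at most one nonzero term per $s$), and conclude via orthonormality of $\{d_f^{-1}f_m\}_{m\ge 1}$ in $L^2(\R_+,\frac{ds}{s})$ and Parseval for $\{e_{k,l}\}$. The only cosmetic difference is that you verify the isometry on norms and invoke polarization, while the paper checks inner-product preservation directly; the computations are identical in substance.
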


For a measurable function $f$, defined on a topological space $X$, equipped with a Borel measure $\mu$, we define its essential support $\text{ess-supp}\,f$ as the intersection of all closed sets $F$, satisfying 
$f(x)=0$ for $\mu$-almost every $x$ in the complement of $F$.  

\begin{thm}\label{Reproducing_Discrete_System_iff_1d_Isometry}
Let us consider $\psi\in L^2(\R^2)$. Suppose that for almost every $y \in \R$ $\text{ess-supp}\, \psi(\hat{\cdot},y)\subset [-1/2,1/2]$.
The system
$$
\psi_{k,m}(x_1,x_2)=2^{-k/2}\psi \left(\frac{x_1-2^km}{2^k},x_2\right)\!, k,m\in \Z ,
$$
with the parameter measure being the counting measure on $\Z\times \Z$,
is reproducing in $L^2(\R^2)$ if and only if for 
every pair $f,g\in L^2(\R)$ the equality
\begin{equation}
\langle f,g \rangle = \sum_k \int_\R 
\overline{\psi\left(\widehat{2^k \xi},y\right)} f(y)\,dy
\int_\R \psi\left(\widehat{2^k \xi},y\right) \overline{g(y)}\,dy
\label{discrete_isometry}
\end{equation}
holds for almost every $\xi\in \R$. 
\end{thm}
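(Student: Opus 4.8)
The plan is to transport everything, via the partial Fourier transform in the first variable, to a one–parameter family (indexed by a frequency variable $\xi$) of statements in $L^2(\R)$, and to recognize (\ref{discrete_isometry}) as the assertion that for almost every $\xi$ the system $\{\psi(\widehat{2^k\xi},\cdot)\}_{k\in\Z}$ is reproducing in $L^2(\R)$. Throughout I will use that ``reproducing with the counting measure'' coincides with the Parseval identity $\sum_{k,m}|\langle f,\psi_{k,m}\rangle|^2=\|f\|^2$ for every $f\in L^2(\R^2)$, the bilinear formula being recovered by polarization. To avoid a clash of names I write $u,v\in L^2(\R)$ for the one–dimensional test functions playing the role of $f,g$ in (\ref{discrete_isometry}), reserving $f$ for the element of $L^2(\R^2)$.

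First I would compute the partial Fourier transform in $x_1$, obtaining $(\mathcal F_1\psi_{k,m})(\xi_1,x_2)=2^{k/2}e^{-2\pi i m 2^k\xi_1}\,\psi(\widehat{2^k\xi_1},x_2)$, and set $F=\mathcal F_1 f$. By Plancherel in the first variable, $\langle f,\psi_{k,m}\rangle=2^{-k}\int_{-1/2}^{1/2}H_k(\eta)\,e^{2\pi i m\eta}\,d\eta$, where $H_k(\eta)=2^{k/2}\int_\R F(2^{-k}\eta,x_2)\overline{\psi(\widehat\eta,x_2)}\,dx_2$. Here the hypothesis $\text{ess-supp}\,\psi(\widehat\cdot,y)\subset[-1/2,1/2]$ is decisive: it forces $H_k$ to be supported in $[-1/2,1/2]$, so the numbers $\langle f,\psi_{k,m}\rangle$ are, up to the factor $2^{-k}$, the Fourier coefficients of $H_k$ with respect to the orthonormal basis $\{e^{2\pi i m\eta}\}_{m\in\Z}$ of $L^2[-1/2,1/2]$. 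Parseval for this exponential basis collapses the sum over $m$; after the change of variable $\eta=2^k\xi$ and an application of Tonelli (all terms being nonnegative) to interchange $\sum_k$ with the $\xi$–integration, I reach the master identity
\begin{equation}
\sum_{k,m}|\langle f,\psi_{k,m}\rangle|^2=\int_\R \sum_k\Big|\int_\R F(\xi,x_2)\overline{\psi(\widehat{2^k\xi},x_2)}\,dx_2\Big|^2 d\xi,
\label{eq:master}
\end{equation}
valid for every $f\in L^2(\R^2)$, while Plancherel gives $\|f\|^2=\int_\R\|F(\xi,\cdot)\|_{L^2(\R)}^2\,d\xi$.

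Granting (\ref{eq:master}), the statement reduces to comparing the two $\xi$–integrals. For the direction ``reproducing $\Rightarrow$ (\ref{discrete_isometry})'' I would test (\ref{eq:master}) against separable functions $F(\xi,x_2)=a(\xi)u(x_2)$ with $u\in L^2(\R)$; choosing $a=\chi_E$ over sets $E$ of finite measure shows that the nonnegative, locally integrable function $\xi\mapsto\sum_k|\langle u,\psi(\widehat{2^k\xi},\cdot)\rangle|^2$ equals $\|u\|^2$ for almost every $\xi$, and polarizing in $u$ (replacing $u$ by $u\pm v$ and $u\pm i v$) yields the bilinear identity (\ref{discrete_isometry}), with $(f,g)$ there being $(u,v)$, for each fixed pair and almost every $\xi$.

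The converse direction is where the quantifiers must be handled with care, and I expect this to be the main obstacle. The hypothesis supplies, for each fixed pair, an exceptional null set that may depend on the pair, whereas to evaluate (\ref{eq:master}) I need, for almost every $\xi$, the Parseval identity $\sum_k|\langle u,\psi(\widehat{2^k\xi},\cdot)\rangle|^2=\|u\|^2$ for the specific slice $u=F(\xi,\cdot)$, which itself varies with $\xi$. I would resolve this by fixing a countable dense set $\{u_n\}\subset L^2(\R)$ and applying the bilinear hypothesis to all pairs $(u_n,u_{n'})$; outside a single null set this gives the full polarized identity on the rational linear span of $\{u_n\}$, so the analysis operator $u\mapsto(\langle u,\psi(\widehat{2^k\xi},\cdot)\rangle)_k$ is isometric on a dense subspace and therefore extends to an isometry of $L^2(\R)$. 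Hence for almost every $\xi$ the Parseval identity holds for all $u$, in particular for $u=F(\xi,\cdot)$; substituting into (\ref{eq:master}) and integrating gives $\sum_{k,m}|\langle f,\psi_{k,m}\rangle|^2=\|f\|^2$, so the system is reproducing. The remaining routine points—measurability of the slice maps $\xi\mapsto\langle F(\xi,\cdot),\psi(\widehat{2^k\xi},\cdot)\rangle$, justification of Tonelli, and the passage between the Parseval and bilinear forms—I would dispatch with the standard density and polarization arguments.
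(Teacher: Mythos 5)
Your proposal is correct, but it takes a genuinely different route from the paper's proof. The paper never works with general $f\in L^2(\R^2)$ directly: it restricts to tensor products $f_1\otimes f_2$ with Schwartz factors, writes the coefficients $\langle f,\psi_{k,m}\rangle$ as samples $2^km$ of convolutions $f(\cdot,x_2)*\psi(\cdot^*_{2^k},x_2)$, and collapses the sum over $m$ with a band-limited sampling lemma (Lemma \ref{Inner_Product_for_BL}, which says $2^k\sum_m f(2^km)\overline{g(2^km)}=\int\hat f\,\overline{\hat g}$ for spectra in $[-2^{-k-1},2^{-k-1}]$); your computation collapses the $m$-sum by Parseval for Fourier series of $H_k$ on $[-1/2,1/2]$, which is the same tool in dual, frequency-side clothing, but applied at once to every $f\in L^2(\R^2)$, yielding your master identity \eqref{eq:master} with no tensor-product reduction. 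This difference propagates into the two directions: your forward argument ($F=\chi_E\otimes u$, then a Lebesgue-type argument and polarization) replaces the paper's terser route through the a.e.\ identity for Schwartz functions plus a density argument in the mixed-norm space $L^\infty(l^2)$; and in the converse, because \eqref{eq:master} quantifies over all $f$, you genuinely need the hypothesis to hold simultaneously off a single null set for all slices $u=F(\xi,\cdot)$ — you correctly identify this as the crux and resolve it with a countable dense family and extension of the densely defined isometry (the extension coincides with the analysis map since each coordinate functional $u\mapsto\langle u,\psi(\widehat{2^k\xi},\cdot)\rangle$ is continuous, so coordinatewise limits identify it). The paper avoids this simultaneity issue entirely: its identity \eqref{discrete_reproduction_for_tensors} for a finite sum of tensor products invokes the hypothesis only for finitely many pairs $(f_2^i,g_2^j)$ at a time, with density of such sums in $L^2(\R^2)$ doing the rest, at the cost of two density passages that your argument dispenses with. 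One small point you should make explicit: $H_k$ is a priori only in $L^1[-1/2,1/2]$ (Cauchy--Schwarz in $x_2$, then in $\eta$), not obviously in $L^2$; however, for $L^1$ functions the identity $\sum_m|c_m(H_k)|^2=\|H_k\|_{L^2}^2$ still holds with values in $[0,\infty]$ (finiteness of the left side forces $H_k\in L^2$ by uniqueness of Fourier coefficients), so \eqref{eq:master} is an identity in $[0,\infty]$ and your use of Tonelli goes through — a routine repair, not a gap.
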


\begin{cor}\label{2d_Orthonormal_Basis}
The system $\left\{\psi_{k,m}^D\right\}_{k,m\in \Z}$, defined in 
(\ref{2d_orthonormal_basis}), is an orthonormal basis of $L^2(\R^2)$.
\end{cor}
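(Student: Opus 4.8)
The plan is to deduce the corollary from Theorem \ref{Reproducing_Discrete_System_iff_1d_Isometry} together with a norm computation, by showing that for generic $\xi$ the rescaled slices of $\psi^D$ reproduce the orthonormal basis $\{e_{k,l}\}$. First I would verify the essential-support hypothesis of that theorem. For a fixed $y$ exactly one index $k$ survives in (\ref{2d_generating_function}), namely the one with $y\in(k,k+1]$, and every surviving summand $f_{D(k,l)}$ is supported where $|s|\le 2^{-D(k,l)}\le 2^{-1}$ because $D$ takes values in $\N=\{1,2,\dots\}$. Hence $\text{ess-supp}\,\psi^D(\hat\cdot,y)\subset[-1/2,1/2]$ for almost every $y$.

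The heart of the argument is the isometry identity (\ref{discrete_isometry}). Fix $\xi\ne 0$ with $\log_2|\xi|\notin\Z$, which excludes only a null set, and write $\phi_k(\cdot)=\psi^D(\widehat{2^k\xi},\cdot)$. Since the $f_m$ have pairwise disjoint $s$-supports, at $s=2^k\xi$ at most one index $m\ge 1$ is active, namely the unique $m(2^k\xi)$ with $|2^k\xi|\in(2^{-m-1},2^{-m}]$ when $|2^k\xi|\le 1/2$, and none when $|2^k\xi|>1/2$, in which case $\phi_k=0$. As $D$ is injective, (\ref{2d_generating_function}) then collapses to a single basis element, $\phi_k=e_{D^{-1}(m(2^k\xi))}$.

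Next I would show that the re-indexing $k\mapsto m(2^k\xi)$ is a bijection from $\{k:\phi_k\ne 0\}$ onto $\N$. Setting $t=k+\log_2|\xi|$, one has $m(2^k\xi)=\lfloor -t\rfloor$, which is $\ge 1$ exactly when $t\le -1$, increases by one whenever $k$ decreases by one, and—because $\log_2|\xi|$ is not an integer—skips no value and repeats none; a short check shows the largest admissible $k$ yields $m=1$. Composing with the bijection $D^{-1}:\N\to\Z\times\Z$, the family $\{\phi_k:\phi_k\ne 0\}$ is exactly the orthonormal basis $\{e_{k,l}\}_{k,l\in\Z}$ of $L^2(\R)$, each element occurring once. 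The right-hand side of (\ref{discrete_isometry}) becomes $\sum_{k,l}\langle f,e_{k,l}\rangle\overline{\langle g,e_{k,l}\rangle}=\langle f,g\rangle$ by Parseval, so Theorem \ref{Reproducing_Discrete_System_iff_1d_Isometry} applies and $\{\psi_{k,m}^D\}$ is reproducing, hence a Parseval (normalized tight) frame.

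Finally, a Parseval frame is an orthonormal basis precisely when every element has unit norm, so it remains to compute $\|\psi^D\|$. Since translation--dilation in the first variable is an isometry of $L^2(\R^2)$, $\|\psi_{k,m}^D\|=\|\psi^D\|$ for all $k,m$, and by Plancherel in the first variable together with the disjointness of the $s$-supports, $\|\psi^D\|^2=\int_\R\sum_l\int_\R f_{D(k(y),l)}(s)\,ds\,dy=\sum_{k,l}2^{-D(k,l)}=\sum_{m=1}^\infty 2^{-m}=1$, the last step using that $D$ bijects $\Z\times\Z$ onto $\N$. This identifies $\{\psi_{k,m}^D\}$ as an orthonormal basis. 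I expect the bijectivity bookkeeping in the third paragraph—tracking how the dyadic rescaling $s=2^k\xi$ sweeps the active index through all of $\N$ exactly once—to be the only delicate point, with everything else a routine computation.
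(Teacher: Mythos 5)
Your proof is correct. Its first half---verifying the support hypothesis and the isometry identity (\ref{discrete_isometry}) by observing that for fixed $\xi\ne 0$ the slice $\psi^D(\widehat{2^k\xi},\cdot)$ collapses to the single basis element $e_{D^{-1}(m(2^k\xi))}$, and that $k\mapsto D^{-1}(m(2^k\xi))$ enumerates $\{e_{r,s}\}_{r,s\in\Z}$ exactly once---is essentially the paper's completeness argument, with the dyadic bookkeeping made more explicit. (Your exclusion of integer $\log_2|\xi|$ is actually unnecessary, since the half-open intervals $(2^{-m-1},2^{-m}]$ settle the boundary cases for every $\xi\ne 0$, but it is harmless because Theorem \ref{Reproducing_Discrete_System_iff_1d_Isometry} only requires the identity for almost every $\xi$.) Where you genuinely diverge is the second half: the paper proves pairwise orthogonality directly, computing $\langle\psi^D_{k,m},\psi^D_{k',m'}\rangle=\delta_{k,k'}\,\delta_{m,m'}$ via Plancherel in $x_1$, orthogonality of $\{e_{r,s}\}$, and disjointness of the sets $\{\,|\xi|\in(2^{-D(r,s)-1-k},2^{-D(r,s)-k}]\,\}$, $k\in\Z$, and then concludes from completeness plus orthonormality. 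You instead invoke the standard fact that a Parseval frame whose elements all have unit norm is an orthonormal basis, reducing everything to the norm computation $\|\psi^D\|^2=\sum_{k,l}2^{-D(k,l)}=\sum_{m\ge 1}2^{-m}=1$, which is precisely part (iii) of the paper's Lemma \ref{Basic_psi_^D_properties}. Your route is shorter and avoids the case analysis over $k,k',m,m'$, at the cost of importing the frame-theoretic lemma (whose one-line proof is worth recalling if you use it: in a Parseval frame, $\|f_n\|^2=\sum_m|\langle f_n,f_m\rangle|^2\ge\|f_n\|^4$, so unit norm forces $\langle f_n,f_m\rangle=0$ for $m\ne n$); the paper's direct computation is self-contained and exhibits concretely how the affine action in $x_1$ interacts with the dyadic structure of $\psi^D$.
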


Time-frequency representations exhibit the essence of wavelet orthonormal systems. The basic time-frequency characteristics of wavelets are frequently illustrated with tilings of the plane $\{(x,\xi)\,|\,x,\xi \in \R\}$, where coordinate  $x$ represents time and coordinate $\xi$ frequency. In the case of our current interest, Shannon wavelet  tiling has the form
\begin{equation}
\left\{ \chi_{(2^km,2^k(m+1)]}(x) \chi_{2^{-k}I}(\xi)\right\}_{ k,m\in \Z},\, I=-(1/2,1]\cup (1/2,1].
\label{shannon_wavelet_tiling}
\end{equation}
In terms of set representations, this is just the basic building block $(0,1]\times I$ transformed via affine actions of the lattice points $\left\{(2^km,2^{-k})\right\}_{k,m \in \Z}$, resulting in a partitioning of the time-frequency plane into pairs of rectangles of combined area $1$, pairs corresponding to a separate treatment of positive and negative frequencies. In terms of the interpretation, covering property corresponds to completeness, and disjointness of blocks of the partitioning to orthogonality relations.
How about the lift to $L^2(\R^2)$ presented in Corollary \ref{2d_Orthonormal_Basis}? Can one get an equally clear explanation of completeness and orthogonality phenomena for $\left\{\psi_{k,m}^D\right\}_{k,m\in \Z}$  via affine actions of the lattice points $\left\{(2^km,2^{-k})\right\}_{k,m \in \Z}$? Yes, there is a very similar explanation, but now the building blocks of the partitioning are unbounded, of a  hyperboloid type, with their exact shape depending on the choice of the bijection $D$. The exact form of the  tiling of the phase space $\R^4=\left\{ (x_1,x_2,\xi_1,\xi_2)\,|\, x_1,x_2, \xi_1,\xi_2 \in \R \right\}$, with $x_1,x_2$ position and $\xi_1,\xi_2$ momentum coordinates, is now
\begin{equation}
\chi_{(2^km,2^k(m+1)]}(x_1)\sum_{n,l}
\chi_{(n,n+1]}(x_2)\chi_{(l,l+1]}(\xi_2) \chi_{2^{-k}I_{D(n,l)}}(\xi_1),\, k,m\in \Z,
\label{phase_space_tiling_interpretation}
\end{equation}
with $I_r=2^{-r}I$, $ r\ge 1$, and $D:\Z \times \Z \rightarrow \N$ a bijection. The expression (\ref{phase_space_tiling_interpretation}) is a product of two functions, the first one depending on $x_1$ and the second one on $x_2,\xi_2,\xi_1$. The first factor is one dimensional and it is the same as in (\ref{shannon_wavelet_tiling}).  The second factor is three dimensional.  It is convenient to visualize it  as a graph over the horizontal plane $\{(x_2,\xi_2)\,|\, x_2,\xi_2 \in \R\}$, partitioned into squares $(n,n+1]\times (l,l+1]$, $n,l \in \Z$, with two boxes $(n,n+1]\times (l,l+1]\times 2^{-k}I_{D(n,l)}$, one
placed above and one placed below each square. Variable $\xi_1$ is the vertical coordinate.

The problem of constructing reproducing formulae out of group representations attracted substantial attention in the field of wavelets  since early 90's. The origin of it goes back to the theory of coherent states of mathematical physics. The book by Ali-Antoine-Gazeau \cite{AAG} presents both the current stage of development, as well as the background results. In the field of wavelets, the analytic techniques needed in the study, come from representation theory and analysis in phase space. A large variety of applications provides a strong motivation for the extensive study of the topic. Despite many efforts, no comprehensive understanding of constructions and classifications of reproducing formulae has been achieved so far. New constructions emerge often. In chronological order, substantial contributions came from Torr\'esani \cite{Tor1}, \cite{Tor2}, Kalisa-Torr\'esani \cite{KaTo},  Hogan-Lakey \cite{HoLa}, Bernier-Taylor \cite{BeTa}, Laugesen-Weaver-Weiss-Wilson \cite{LWWW}, De Mari-Nowak \cite{DMNo1}, \cite{DMNo2}, F\"uhr \cite{Fuh}, Cordero-De Mari-Nowak-Tabacco \cite{CDMNT1}, \cite{CDMNT2}, \cite{CDMNT3} , Dahlke-Steidl-Teschke \cite{DST}, De Mari-De Vito and collaborators \cite{DM&Co1}, \cite{DM&Co2}, \cite{DM&Co3}, \cite{DM&Co4}, Cordero-Tabacco \cite{CoTa}, Czaja-King \cite{CzKi1}, \cite{CzKi2}, Namngam-Schulz \cite{NaSc1}, \cite{NaSc2}. One of the reproducing formulae listed in \cite{DM&Co2}, \cite{DM&Co3} gave the inspiration for our current study. The classical paper by Fefferman \cite{Fef} beatifully presents various aspects of the usage of phase space tilings with Heisenberg boxes. The range of applicability, from the point of view of analysis going beyond the context of spaces of square integrable functions defined on Euclidean spaces, of our phase space partitioning into hyperboloid type blocks, still has to be identified.

Books by Daubechies \cite{Dau}, Gr\"ochenig \cite{Gro}, Folland \cite{Fol}, Wojtaszczyk \cite{Woj} are comprehensive references on phase-space analysis and wavelets. We refer the reader to books by {\L}ojasiewicz \cite{Loj} and Rudin \cite{Rud} for the background results we use in our proofs.

\section{Proofs of the Main Results and Auxiliary Facts}

We use the Fourier transform in the form 
$$
\mathcal{F}f(\xi) =\hat{f}(\xi)=\int_{\R^d}f(x)\,e^{-2\pi i \langle x,\xi\rangle}\,dx.
$$
Fourier transform $\mathcal{F}$ is a bijective map on the Schwartz class $\mathcal{S}(\R^d)$, and its inverse $\mathcal{F}^{-1}$ is represented by the integral
$$
\mathcal{F}^{-1}f(\xi)=\check{f}(\xi)=\int_{\R^d}f(x)\,e^{2\pi i \langle x,\xi\rangle}\,dx.
$$
It extends, by density, from the Schwartz class $\mathcal{S}(\R^d)$ to a unitary operator defined on the Hilbert space $L^2(\R^d)$, and, by duality, to a bijective map defined on the space of tempered distributions $\mathcal{S }'(\R^d)$. In case of a function of several variables, we place the symbol $\hat{\text{ }}$ over a given variable, in order to indicate that the Fourier transform was applied to it. 

We start by proving Theorem \ref{Reproducing_Formula_iff_1d_Isometry}. We show that the admissibility condition for the reproducing formula may be expressed in terms of  isometries from $L^2(\R)$ into $L^2\left(\R_+, \frac{ds}{s}\right)$.

\noindent
{\it Proof of Theorem  \ref{Reproducing_Formula_iff_1d_Isometry}.} Take tensor products $f,g \in L^2(\R^2)$, $f(x_1,x_2)=f_1(x_1)\,f_2(x_2)$,  $g(x_1,x_2)=g_1(x_1)\,g_2(x_2)$, with $f_1,f_2,g_1,g_2\in \mathcal{S}(\R)$. In the first step we express the inner products in terms of combined actions of the $L^1(\R)$ involution and dilation $h\mapsto s^{-1}\overline{h(-\cdot/s)}$, denoted as $\cdot^*_s$,
and the convolution on $\R$ represented as $*$  
\begin{align}
\int_{\R^2_+}&\langle f,\psi_{u,s} \rangle \langle \psi_{u,s},g \rangle \frac{du\,ds}{s^2}=\nonumber\\ 
&=\int_{\R^2_+} \int_\R f(\cdot,x_2)*\psi(\cdot^*_s,x_2)(u)\,dx_2
\int_\R \overline{g(\cdot,x_2)}*\overline{\psi(\cdot^*_s,x_2)}(u)\,dx_2 \,du\frac{ds}{s}.
\nonumber
\end{align}
 Representation of inner products as iterated integrals is justified by the fact, that for $u,s$ fixed,  functions $f(x_1,x_2)\,\psi_{u,s}(x_1,x_2)$, $g(x_1,x_2)\,\psi_{u,s}(x_1,x_2)$ are integrable with respect to $x_1,x_2$.
In the second step we apply Plancherel's formula with respect to $u$ and move the Fourier transform under the integral sign in order to get the following:
\begin{align}
\int_{\R^2_+}&\langle f,\psi_{u,s} \rangle \langle \psi_{u,s},g \rangle \frac{du\,ds}{s^2}=\nonumber\\ 
&=
\int_{\R^2_+} \int_\R f(\hat{\xi}, x_2)\overline{\psi(\widehat{s\xi}, x_2)}\,dx_2\,
\int_\R\overline{ g(\hat{\xi}, x_2)}\psi(\widehat{s\xi}, x_2)\,dx_2\, d\xi\frac{ds}{s}.
\nonumber
\end{align}
We use an approximation argument in order to justify the transition of the Fourier transform under the integral sign. We represent the square integrable kernel $\psi(\cdot^*_s,\cdot)$ defined on of $\R^2$, $s$ is fixed, as an infinite sum of orthogonal tensor products, we apply Plancherel's formula to finite sums, and then pass to $L^2(\R^2)$ norm limits in both expressions, the original one and the one obtained by an application of Plancherel's formula. In the third step we change the order of integration, we apply multiplicative invariance of the measure $\frac{ds}{s}$, we  denote by $\pm$ the sign of $\xi$,
and we get that 
\begin{align}
\int_{\R^2_+}&\langle f,\psi_{u,s} \rangle \langle \psi_{u,s},g \rangle \frac{du\,ds}{s^2}=\nonumber\\ 
&= \int_\R \hat{f_1}(\xi)\overline{\hat{g_1}(\xi)}
\int_0^\infty \int_\R f_2(x_2)\overline{\psi(\widehat{\pm s},x_2)}\,dx_2
\int_\R \overline{g_2(x_2)}\psi(\widehat{\pm s},x_2)\,dx_2 \frac{ds}{s}d\xi.
\label{reproduction_for_tensors}
\end{align}
Change of the order of integration is justified via polarization. We write the expressions under the integral signs as sums of the form we obtain for $f_1=g_1$, $f_2=g_2$. Non-negativity of the resulting terms allows us to apply Fubini's theorem.

If the system $\{\psi_{u,s}\}_{u\in \R, s>0}$ is reproducing, then, via formula 
(\ref{reproduction_for_tensors}), we conclude that the maps 
$f\mapsto \int_\R \overline{\psi\left(\widehat{\pm s},y\right)} f(y)\,dy$ restricted to $f\in \mathcal{S}(\R)$ preserve inner products. A standard density argument allows us to extend the statement to all $f\in L^2(\R)$. Conversely, if the maps $f\mapsto \int_\R \overline{\psi\left(\widehat{\pm s},y\right)} f(y)\,dy$ preserve inner products, then 
(\ref{reproduction_for_tensors}) allows us to conclude that for $f,g \in L^2(\R^2)$ being finite sums of tensor products of the form $f_1(x_1)\,f_2(x_2)$,  $g_1(x_1)\,g_2(x_2)$, with $f_1,f_2,g_1,g_2\in \mathcal{S}(\R)$ we have
$$
\int_{\R^2_+}\langle f,\psi_{u,s} \rangle \langle \psi_{u,s},g \rangle \frac{du\,ds}{s^2}=\langle f,g\rangle.
$$ 
Again, a standard density argument allows us to extend the equality to all $f,g\in L^2(\R^2)$.
\qed

The following representation of the inner product on $L^2(\R)$, valid for band limited functions, is the principal tool allowing us to make a transition from the continuous context of Theorem \ref{Reproducing_Formula_iff_1d_Isometry} to the discrete context of Theorem \ref{Reproducing_Discrete_System_iff_1d_Isometry}.

\begin{lem}\label{Inner_Product_for_BL} Let $f,g\in L^2(\R)$. Suppose that $\text{ess-supp}\,\hat{f}, \hat{g}\subset 
[-2^{-k-1}, 2^{-k-1}]$. Then
$$
\int_{-2^{-k-1}}^{2^{-k-1}}\hat{f}(\xi)\overline{\hat{g}(\xi)}\,
d\xi = 2^k\sum_{m\in \Z} f\left(2^km\right) \overline{ g\left(2^km\right)}.
$$
\end{lem}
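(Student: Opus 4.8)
The plan is to recognize this as the Plancherel (Parseval) identity for Fourier series, applied to the band-limited functions $\hat f$ and $\hat g$ on the interval $J=[-2^{-k-1},2^{-k-1}]$, whose length is exactly $2^{-k}$. First I would record the regularity that makes the right-hand side meaningful: since $\hat f$ and $\hat g$ are supported on the bounded set $J$ and lie in $L^2$, the Cauchy--Schwarz inequality gives $\hat f,\hat g\in L^1(\R)$, so their inverse Fourier transforms are bounded continuous functions. Identifying $f$ and $g$ with these continuous representatives, the samples $f(2^km)$ and $g(2^km)$ are well defined, and Fourier inversion yields the \emph{pointwise} formula $f(2^km)=\int_J \hat f(\xi)\,e^{2\pi i 2^k m\xi}\,d\xi$, and likewise for $g$.

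Next I would introduce the exponentials $\phi_m(\xi)=2^{k/2}e^{-2\pi i 2^k m\xi}$, $m\in\Z$, and observe that $\{\phi_m\}_{m\in\Z}$ is an orthonormal basis of $L^2(J)$; orthonormality is the standard computation $\int_J \phi_m(\xi)\overline{\phi_n(\xi)}\,d\xi=\delta_{mn}$, which uses that the frequencies $2^k m$ are precisely the integer multiples of the reciprocal $2^k$ of the period length $2^{-k}$. Comparing with the inversion formula above, the inner product of $\hat f$ against $\phi_m$ on $L^2(J)$ is exactly the scaled sample value: $\langle \hat f,\phi_m\rangle_{L^2(J)}=2^{k/2}\int_J \hat f(\xi)\,e^{2\pi i 2^k m\xi}\,d\xi=2^{k/2} f(2^km)$, and similarly $\langle \hat g,\phi_m\rangle_{L^2(J)}=2^{k/2} g(2^km)$.

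Finally I would apply Parseval's identity for the orthonormal basis $\{\phi_m\}$ in $L^2(J)$ to the pair $\hat f,\hat g$, giving $\int_J \hat f(\xi)\overline{\hat g(\xi)}\,d\xi=\sum_{m\in\Z}\langle \hat f,\phi_m\rangle\,\overline{\langle \hat g,\phi_m\rangle}=2^k\sum_{m\in\Z} f(2^km)\overline{g(2^km)}$, which is the asserted equality. The only step requiring genuine care is the passage to pointwise sample values, that is, justifying that $f$ and $g$ admit continuous representatives for which the samples $f(2^km)$ are defined and the inversion formula holds at every point; this is exactly where the band-limiting hypothesis $\text{ess-supp}\,\hat f,\hat g\subset J$ is used, via the $L^1$ membership noted above. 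The remaining ingredients, the orthonormality computation and the invocation of Parseval, are routine.
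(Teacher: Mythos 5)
Your proof is correct and is essentially the paper's own argument viewed on the Fourier side: the paper uses the orthonormal basis $\hat e_n(\xi)=\chi_{[-1/2,1/2]}(\xi)e^{-2\pi i n\xi}$ of the band-limited space $\text{BL}$, identifies the coefficients $\langle f,e_n\rangle$ with the samples $f(n)$, applies Plancherel, and then rescales from $k=0$ to general $k$ --- which, after conjugating by the unitary Fourier transform, is exactly your Parseval computation with the system $\phi_m(\xi)=2^{k/2}e^{-2\pi i 2^k m\xi}$ on $L^2(J)$. The only differences are cosmetic: you work at scale $k$ directly instead of reducing to $k=0$ by a dilation change of variables, and you justify the delicate pointwise-sampling step via Cauchy--Schwarz, $L^1$ membership of $\hat f$, and continuity of the inverse transform, where the paper instead uses pointwise inversion on $\mathcal{S}(\R)\cap\text{BL}$ plus a density argument; both justifications are standard and valid.
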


\begin{proof}
Assume first that $k=0$. Let $\{e_n\}_{n\in \Z}$ be an orthonormal basis of the Hilbert space 
$$
\text{BL}=\{ f\in L^2(\R)\,|\,\text{ess-supp}\,\hat{f}\subset \left[-1/2, 1/2\right] \},$$ 
given by the formula $\hat{e}_n(\xi)=\chi_{\left[-1/2, 1/2\right]}(\xi)\,e^{-2\pi i n\xi}$.
Observe that for $f\in \text{BL}$ $\langle f,e_n\rangle=f(n)$. Indeed, the Fourier inversion formula holds pointwise for functions in $\mathcal{S}(\R)\cap \text{BL}$. Moreover, convergence in $\text{BL}$ implies pointwise convergence, therefore a standard density arguments shows that the identity holds for all functions in $\text{BL}$. An applications of Plancherel's formula gives
$$
\sum_{n\in \Z}f(n)\overline{g(n)}=\sum_{n\in \Z}\langle f,e_n \rangle
\overline{\langle g,e_n \rangle}=\langle f,g \rangle = \langle \hat{f},\hat{g} \rangle = \int_{-1/2}^{1/2}\hat{f}(\xi)\overline{\hat{g}(\xi)}d\xi.
$$
A change of variables allows us to derive the formula for any $k\in \Z$. Indeed
\begin{align}
\int_{-2^{-k-1}}^{2^{-k-1}}&\hat{f}(\xi)\overline{\hat{g}(\xi)}\,d\xi =
\int_{-1/2}^{1/2}\hat{f}\left(\frac{\xi}{2^k}\right)\overline{\hat{g}
\left(\frac{\xi}{2^k}\right)}\,\frac{d\xi}{2^k}=\nonumber \\
&=2^{-k}\int_{-1/2}^{1/2}\left( f_{2^{-k}}\right)^{\wedge}(\xi)
\overline{\left( g_{2^{-k}}\right)^{\wedge}(\xi)}\, d\xi=
2^{-k}\sum_{n\in \Z} 2^kf(2^kn)2^k\overline{g(2^kn)},
\nonumber
\end{align}
where the subscript applied to a function  denotes the $L^1(\R)$ dilation, i.e.
$h_s(x)=\frac{1}{s}h(\frac{x}{s})$.
\end{proof}

We are ready to prove  Theorem \ref{Reproducing_Discrete_System_iff_1d_Isometry}.
The method of the proof of Theorem \ref{Reproducing_Formula_iff_1d_Isometry} combined with the formula of Lemma \ref{Inner_Product_for_BL} allow us to formulate necessary and sufficient conditions for the discretized system to be a Parseval frame. Again the admissibility condition is expressed in terms of isometries, now from $L^2(\R)$ into $l^2(\Z)$.

\noindent
{\it Proof of Theorem  \ref{Reproducing_Discrete_System_iff_1d_Isometry}.}
Take tensor products $f,g \in L^2(\R^2)$, $f(x_1,x_2)=f_1(x_1)\,f_2(x_2)$,  $g(x_1,x_2)=g_1(x_1)\,g_2(x_2)$, with $f_1,f_2,g_1,g_2\in \mathcal{S}(\R)$. In the first step we express inner products $\langle f,\psi_{k,m} \rangle, \langle g, \psi_{k,m} \rangle $ as iterated integrals 
\begin{align}
\sum_{k,m}&\langle f,\psi_{k,m} \rangle \langle \psi_{k,m},g \rangle =\nonumber\\ 
&=\sum_{k,m} 2^k \int_\R f(\cdot,x_2)*\psi(\cdot^*_{2^k},x_2)(2^km)\,dx_2
\int_\R \overline{g(\cdot,x_2)}*\overline{\psi(\cdot^*_{2^k},x_2)}(2^km)\,dx_2,
\nonumber
\end{align}
where $\cdot^*_{2^k}$ is the combined action of the $L^1(\R)$ involution and dilation $h\mapsto 2^{-k/2}\overline{h(-\cdot/2^k)}$, and $*$ is the convolution on $\R$.
The transition to iterated integrals is justified by the integrability of $f(x_1,x_2)\,\psi_{k,m}(x_1,x_2)$, 
$g(x_1,x_2)\,\psi_{k,m}(x_1,x_2)$ with respect to $x_1,x_2$, parameters $k,m$ are fixed.
In the second step we apply Lemma \ref{Inner_Product_for_BL}, we obtain that
\begin{align}
\sum_{k,m}&\langle f,\psi_{k,m} \rangle \langle \psi_{k,m},g \rangle =\nonumber\\ 
&=\sum_k \int_\R  \int_\R f(\hat{\xi}, x_2)\overline{\psi(\widehat{2^k\xi}, x_2)}\,dx_2\,
\int_\R\overline{ g(\hat{\xi}, x_2)}\psi(\widehat{2^k\xi}, x_2)\,dx_2\, d\xi.
\nonumber
\end{align}
 The usage of Lemma \ref{Inner_Product_for_BL} is justified by the fact that for almost every $x_2\in \R$ we have $\text{ess-supp}\,\psi(\cdot^*_{2^k},x_2)\subset \left[-2^{-k-1}, 2^{-k-1}\right]$. We represent the square integrable kernel $\psi(\cdot^*_{2^k},\cdot)$, defined on $\R^2$, $k$ is fixed, as an infinite sum of orthogonal tensor products of functions, band limited, with respect to the first coordinate, and square integrable, with respect to the second coordinate. Then, we apply Lemma \ref{Inner_Product_for_BL} to finite sums, and next we pass to norm limits in both expressions, the original one, and the one obtained by an application of  Lemma \ref{Inner_Product_for_BL}. In the next step we change  the order of summation and integration and we get 
\begin{align}
\sum_{k,m}&\langle f,\psi_{k,m} \rangle \langle \psi_{k,m},g \rangle =\nonumber\\ 
&=\int_\R \hat{f_1}(\xi)\overline{\hat{g_1}(\xi)}
\sum_k \int_\R f_2(x_2)\overline{\psi(\widehat{2^k \xi},x_2)}\,dx_2
\int_\R \overline{g_2(x_2)}\psi(\widehat{2^k \xi},x_2)\,dx_2 \,d\xi.
\label{discrete_reproduction_for_tensors}
\end{align}
An application of polarization formula produces non-negative terms, therefore Fubini's theorem applies.

If the system $\{\psi_{k,m}\}_{k,m \in \Z}$ is reproducing, then, via formula 
(\ref{discrete_reproduction_for_tensors}), we have  
$$\int_\R|\hat{f}_1(\xi)|^2\sum_k \left|\int_\R f_2(x_2) 
\overline{\psi(\widehat{2^k \xi},x_2)}\,dx_2\,\right|^2d\xi
=||f_1||^2||f_2||^2,
$$
for all $f_1,f_2\in \mathcal{S}(\R)$,
and therefore for every $f\in \mathcal{S}(\R)$
\begin{equation}
\sum_k \left|\int_\R 
\overline{\psi(\widehat{2^k \xi},y)} f(y)\,dy\,\right|^2 = ||f||^2
\label{discrete_a_e_norm_condition}
\end{equation}
for almost every $\xi \in \R$. A standard density argument, making use of the convergence in the mixed norm space $L^\infty(l^2)$, allows us to conclude that for every 
$f\in L^2(\R)$
(\ref{discrete_a_e_norm_condition})
holds for almost every $\xi \in \R$. The fact that 
for every pair $f,g\in L^2(\R)$ the equality (\ref{discrete_isometry})
holds for almost every $\xi\in \R$ follows by polarization. 
Conversely, if for every pair $f,g\in L^2(\R)$ the equality (\ref{discrete_isometry})
holds for almost every $\xi\in \R$, then
(\ref{discrete_reproduction_for_tensors}) allows us to conclude that for $f,g \in L^2(\R^2)$ being finite sums of tensor products of the form $f_1(x_1)\,f_2(x_2)$,  $g_1(x_1)\,g_2(x_2)$, with $f_1,f_2,g_1,g_2\in \mathcal{S}(\R)$ we have
$$
\sum_{k,m}\langle f,\psi_{k,m} \rangle \langle \psi_{k,m},g \rangle =\langle f,g\rangle.
$$
Again, a standard density argument allows us to extend the equality to all $f,g\in L^2(\R^2)$.
\qed

The following lemma summarizes the basic properties of the generating functions $\psi^D$. 
\begin{lem}\label{Basic_psi_^D_properties} 
Let $\psi^D$ be the generating function defined in
(\ref{2d_generating_function}). Then
\begin{align}
\text{(i)}\, &\text{the sum } (\ref{2d_generating_function}) \text{ representing } \psi^D(\hat{s},y)\, \text{ consists of a single term }
f_{D(k,l)}(s)e_{k,l}(y), 
\nonumber \\
&\text{for } s\in [-1/2,0)\cup (0,1/2] \text{, with the unique }k,l 
\text{ satisfying } 
\nonumber \\
&s\in (2^{-D(k,l)-1}, 2^{-D(k,l)}], \text{ and it contains no non-zero terms for }
\nonumber \\
&s\notin [-1/2,0)\cup (0,1/2],
\nonumber \\
\text{(ii)}\, &\text{ess-supp}\,\psi^D(\hat{\cdot},y)\subset [-1/2,1/2] \text{ for every }y\in \R ,
\nonumber \\
\text{(iii)}\, &\int_{\R^2}\left| \psi^D(\hat{s},y) \right|^2dy\, ds =1,
\nonumber \\
\text{(iv)}\,&S_N^D(s,y)=\sum_{|k|\le N,|l|\le N}f_{D(k,l)}(s)e_{k,l}(y)
\text{ converges to }\psi^D(\hat{s},y) \text{ in } L^2(\R^2) \text{ as }
N \rightarrow \infty.
\nonumber
\end{align}
\end{lem}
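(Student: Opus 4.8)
The plan is to extract all four claims from two elementary structural observations: first, that the half-open intervals $(2^{-m-1},2^{-m}]$, $m\ge 1$, form a disjoint cover of $(0,1/2]$ in the variable $|s|$; and second, that $D$ is a bijection, so the index $m=D(k,l)$ assumes each value of $\N$ exactly once as $(k,l)$ ranges over $\Z\times\Z$. I would begin with (i). Fixing $s$, the factor $f_{D(k,l)}(s)=\chi_{(2^{-D(k,l)-1},2^{-D(k,l)}]}(|s|)$ is nonzero precisely when $|s|$ lies in the interval indexed by $D(k,l)$. By the disjoint-cover property, if $|s|\in(0,1/2]$ there is exactly one $m\ge 1$ with $|s|\in(2^{-m-1},2^{-m}]$, and since $D$ is a bijection there is exactly one pair $(k,l)$ with $D(k,l)=m$; hence a single surviving term $f_{D(k,l)}(s)e_{k,l}(y)$. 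If $|s|>1/2$ or $s=0$ no interval contains $|s|$, so every term vanishes. This is exactly (i), and (ii) is its immediate corollary, since it shows $\psi^D(\hat\cdot,y)$ vanishes off $[-1/2,1/2]$ for every $y$.

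For (iii) I would use (i) to collapse the inner $y$-integral. For $s$ with $0<|s|\le 1/2$, writing $(k(s),l(s))=D^{-1}(m(s))$ for the unique surviving index, we have $|\psi^D(\hat s,y)|^2=|e_{k(s),l(s)}(y)|^2=\chi_{(k(s),k(s)+1]}(y)$, whose integral over $y$ equals $1=\|e_{k,l}\|_{L^2}^2$; for $|s|>1/2$ the integrand is $0$. Integrating in $s$ then reduces to the Lebesgue measure of $[-1/2,0)\cup(0,1/2]$, namely $1$. Equivalently, organizing the computation by $m$ yields $\sum_{m\ge1}\int_{\{|s|\in(2^{-m-1},2^{-m}]\}}ds=\sum_{m\ge1}2^{-m}=1$.

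The heart of the lemma, and the part requiring the most care, is (iv), which simultaneously certifies that the defining series (\ref{2d_generating_function}) is meaningful. I would first verify that the family $\{f_{D(k,l)}e_{k,l}\}_{(k,l)\in\Z\times\Z}$ is orthogonal in $L^2(\R^2)$ through two disjointness mechanisms: if $k\neq k'$ the factors $e_{k,l},e_{k',l'}$ have disjoint $y$-supports $(k,k+1]$ and $(k',k'+1]$, while if $k=k'$ but $l\neq l'$ then $D(k,l)\neq D(k',l')$, so $f_{D(k,l)},f_{D(k',l')}$ have disjoint $s$-supports; in either case the tensor products are orthogonal. Next I would record the squared norms $\|f_{D(k,l)}e_{k,l}\|_{L^2(\R^2)}^2=\|f_{D(k,l)}\|_{L^2(\R,ds)}^2\,\|e_{k,l}\|_{L^2(\R,dy)}^2=2^{-D(k,l)}\cdot 1$ and sum them using bijectivity of $D$, obtaining $\sum_{(k,l)}2^{-D(k,l)}=\sum_{m\ge1}2^{-m}=1<\infty$. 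Since an orthogonal family with summable squared norms is unconditionally summable in a Hilbert space, the series converges to $\psi^D$, and the Pythagorean identity gives $\|\psi^D-S_N^D\|^2=\sum_{\max(|k|,|l|)>N}2^{-D(k,l)}$, which is the tail of a convergent series and hence tends to $0$ as $N\to\infty$.

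I do not anticipate a deep obstacle: the lemma is essentially bookkeeping driven by the bijectivity of $D$ and the disjointness of the relevant supports. The only points demanding genuine attention are keeping track of $|s|$ versus $s$ when invoking the disjoint cover in (i), and, in (iv), selecting the correct disjointness mechanism in each of the two cases before passing to unconditional summability and the tail estimate.
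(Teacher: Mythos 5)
Your proposal is correct and takes essentially the same approach as the paper: disjointness of the supports of the blocks $f_{D(k,l)}\,e_{k,l}$ together with bijectivity of $D$ drives all four parts, and your explicit orthogonal-series argument for (iv) (summable squared norms $2^{-D(k,l)}$, Pythagorean tail estimate) just spells out what the paper sketches by reference to the technique of (iii). The only cosmetic difference is in (iii), where the paper rearranges the rectangles by a measure-preserving shift to the position $k=0$ while you collapse the $y$-integral pointwise in $s$ using (i); these are the same computation.
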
 

\begin{proof} Part (i) follows directly from definitions. 
Part (ii) is justified by the fact that $\text{ess-supp}\,\psi^D(\hat{\cdot},y) \subset  
\text{ess-supp}\,\sum_{k,l}f_{D(k,l)} = [-1/2,1/2]$.
We observe that the building blocks 
$\{f_{D(k,l)}e_{k,l}(y)\}_{k,l\in Z}$
of formula (\ref{2d_generating_function})
have pairwise disjoint supports. We conclude that
\begin{equation}
\left|\psi^D(\hat{s},y)\right|=\sum_{k,l\in \Z}f_{D(k,l)}(s)\left| e_{k,l}(y)\right|
=\sum_{k,l\in \Z}\chi_{(2^{-D(k,l)-1}, 2^{-D(k,l)}]}(|s|)
\chi_{(k,k+1]}(y).
\label{psi_D_abs_sum}
\end{equation} 
The operation of shifting pairs of rectangles
$$
\chi_{(2^{-D(k,l)-1}, 2^{-D(k,l)}]}(|s|) \chi_{(k,k+1]}(y)
$$
of formula (\ref{psi_D_abs_sum}) to position $k=0$, i.e. to 
$$
\chi_{(2^{-D(k,l)-1}, 2^{-D(k,l)}]}(|s|) \chi_{(0,1]}(y)
$$
is measure preserving, therefore out of formula (\ref{psi_D_abs_sum})
we infer that
$$
\int_{\R^2}\left| \psi^D(\hat{s},y) \right|^2dy\, ds 
= \int_{-1/2}^{1/2}\int_0^1 dy\, ds = 1.
$$
This proves part (iii). Part (iv) follows via the technique applied to part (iii).
We observe that as $N$ gets large enough, only the dyadic intervals representing 
$f_{D(k,l)}$, which are close to $0$, show up in the expression controlling 
the $L^2(\R^2)$ norm of $\psi^D(\hat{\cdot},\cdot)-S_N^D$. 
\end{proof}

With the background results presented so far, we are ready now to draw conclusions for the generating functions $\psi^D$, i.e. to prove Corollaries \ref{2d_Reproducing_System}, \ref{2d_Orthonormal_Basis}.

\noindent
{\it Proof of Corollary \ref{2d_Reproducing_System}.}
We have checked in Lemma \ref{Basic_psi_^D_properties} that 
$\psi^D\in L^2(\R^2)$. We know via Theorem
\ref{Reproducing_Formula_iff_1d_Isometry} that it is enough to show that the
$\pm$ maps
$$
f\mapsto \int_\R \overline{\psi^D\left(\widehat{\pm s},y\right)} f(y)\,dy,
$$
from $L^2(\R)$ into $L^2(\R_+,\frac{ds}{s})$, preserve inner products.
We have verified in Lemma \ref{Basic_psi_^D_properties} that for both $\pm s$, where $s>0$ is fixed, at most one of the terms of 
$$
\sum_{k,l}f_{D(k,l)}(\pm s)e_{k,l}(y)
$$  
is non-zero. Therefore, for every $s>0$, we may switch over summation and integration and we obtain
\begin{equation}
\int_\R \overline{\psi^D\left(\widehat{\pm s},y\right)} f(y)\,dy=
\sum_{k,l}f_{D(k,l)}(\pm s)\langle f,e_{k,l}\rangle
\label{k_l_expansion_for_f}
\end{equation}
and
\begin{equation}
\int_\R \overline{\psi^D\left(\widehat{\pm s},y\right)} g(y)\,dy=
\sum_{k,l}f_{D(k,l)}(\pm s)\langle g,e_{k,l}\rangle.
\label{k_l_expansion_for_g}
\end{equation}
The systems $\{d_f^{-1}f_m(\pm s)\}_{m\ge 1}$ are orthonormal in 
$L^2(\R_+,\frac{ds}{s})$. Formulas (\ref{k_l_expansion_for_f}),
(\ref{k_l_expansion_for_g}) allow us to conclude that
$$
\left < \int_\R \overline{d_f^{-1}\psi^D\left(\widehat{\pm s},y\right)} f(y)\,dy,
\int_\R \overline{d_f^{-1}\psi^D\left(\widehat{\pm s},y\right)} g(y)\,dy
\right > _{L^2(\R_+,\frac{ds}{s})}=
$$
$$
=\sum_{k,l}\langle f,e_{k,l}\rangle \overline{\langle g,e_{k,l}\rangle}
=\langle f,g \rangle,
$$
and this finishes the proof.
\qed

\noindent
{\it Proof of Corollary \ref{2d_Orthonormal_Basis}.}
We have verified in Lemma \ref{Basic_psi_^D_properties} that 
$\psi^D\in L^2(\R^2)$ and that for every $y \in \R$ $\text{ess-supp}\, \psi^D(\hat{\cdot},y)\subset [-1/2,1/2]$. We know via Theorem \ref{Reproducing_Discrete_System_iff_1d_Isometry},
that in order to show that the system 
$\left\{\psi_{k,m}^D\right\}_{k,m\in \Z}$
is reproducing, it is enough to show that for every pair $f,g\in L^2(\R)$ the equality
$$
\langle f,g \rangle = \sum_k \int_\R 
\overline{\psi^D\left(\widehat{2^k \xi},y\right)} f(y)\,dy
\int_\R \psi^D\left(\widehat{2^k \xi},y\right) \overline{g(y)}\,dy
$$
holds for almost every $\xi\in \R$. 
Support properties of functions $f_{D(r,s)}$, $f_{D(r',s')}$ imply that for every $\xi \ne 0$
\begin{align}
\sum_k &\int_\R 
\overline{\psi^D\left(\widehat{2^k \xi},y\right)} f(y)\,dy
\int_\R \psi^D\left(\widehat{2^k \xi},y\right) \overline{g(y)}\,dy
\nonumber \\
&=\sum_k\sum_{\substack{r,s \\ r',s'}}f_{D(r,s)}(2^k\xi)f_{D(r',s')}(2^k\xi)
\langle f,e_{r,s}\rangle \overline{\langle g,e_{r,s}\rangle}
\nonumber \\
&= \sum_{r,s}\langle f,e_{r,s}\rangle \overline{\langle g,e_{r,s}\rangle}
=\langle f, g \rangle.
\nonumber
\end{align}
Changes of the order of summation with respect to $r,s$ and $r',s'$, with the  integration with respect to $y$, 
are justified by Lemma \ref{Basic_psi_^D_properties}, i.e. by the fact that the the sums with respect   
to $r,s$ and $r',s'$ consist of at most one term. We have proved completeness of the system $\left\{\psi_{k,m}^D\right\}_{k,m\in \Z}$.

Now we prove the fact that the functions of the system $\left\{\psi_{k,m}^D\right\}_{k,m\in \Z}$ are pairwise orthogonal. 
The first step is based on an application of Plancherel's formula with respect to $x_1$ 
\begin{align}
\langle \psi^D_{k,m}&, \psi^D_{k',m'}\rangle = \int_{\R^2}
2^{-k/2}\psi^D \left( \frac{x_1-2^km}{2^k},x_2\right)
2^{-k'/2}\overline{\psi^D \left( \frac{x_1-2^{k'}m'}{2^{k'}},x_2\right)}\,dx_1\,dx_2 =
\nonumber \\
&=\int_{\R^2}
2^{k/2}e^{-2\pi i 2^km\xi}\psi^D \left( \widehat{2^k\xi},x_2\right)
2^{-k'/2}e^{2\pi i 2^{k'}m'\xi}
\overline{\psi^D \left( \widehat{2^{k'}\xi},x_2\right)}\,d\xi\,dx_2
\nonumber
\end{align}
The transition to iterated integrals is justified by the fact that the function under the integral sign is integrable.
In the second step we change the order of integration and we apply the definition of $\psi^D$
\begin{align}
&\int_{\R^2}
2^{k/2}e^{-2\pi i 2^km\xi}\psi^D \left( \widehat{2^k\xi},x_2\right)
2^{-k'/2}e^{2\pi i 2^{k'}m'\xi}
\overline{\psi^D \left( \widehat{2^{k'}\xi},x_2\right)}\,dx_2\,d\xi
\nonumber \\
&=\sum_{\substack{r,s \\ r',s'}} 
\int_{\R^2}2^{k/2}e^{-2\pi i 2^km\xi}f_{D(r,s)}(2^k\xi)e_{r,s}(x_2)
2^{-k'/2}e^{2\pi i 2^{k'}m'\xi}f_{D(r',s')}(2^{k'}\xi)\overline{e_{r',s'}(x_2)}
\,dx_2\,d\xi
\nonumber
\end{align}
The change of order of integration is justified by the integrability of the function under the integral sign. 
Moving out the summations with respect to $r,s$ and $r', s'$ is justified by the $L^2(\R^2)$ norm convergence of the expansion of (\ref{2d_generating_function}) stated in Lemma \ref{Basic_psi_^D_properties}. In the third step
we apply the orthogonality of the system $\{e_{r,s}\}_{r,s\in \Z}$
and the fact that the sets $\{\xi \in \R\,|\,|\xi|\in (2^{-D(r,s)-1-k},2^{-D(r,s)-k}]\}$, $k\in \Z$,
are pairwise disjoint
\begin{align}
&\sum_{r,s}\int_{\R}
2^{k/2}e^{-2\pi i 2^km\xi}f_{D(r,s)}(2^k\xi)
2^{-k'/2}e^{2\pi i 2^{k'}m'\xi}f_{D(r,s)}(2^{k'}\xi)\,d\xi
\nonumber \\
&=\delta_{k,k'}2^k\int_{-2^{-k-1}}^{2^{-k-1}}
e^{2\pi i 2^k (m'-m)\xi}\,d\xi = \delta_{k,k'}\,\delta_{m,m'}.
\nonumber
\end{align}
We have verified both completeness and orthogonality of the system $\left\{\psi_{k,m}^D\right\}_{k,m\in \Z}$, therefore the fact that  it constitutes an orthonormal basis follows.
\qed

{\bf Acknowledgements} 

The authors would like to thank CIRM (Luminy, Marseille, France), Jean-Morlet Semester, Hans Feichtinger (Chair) and Bruno Torr\'esani  (Local project leader), where the work on this paper began, and the University of P\'ecs (P\'ecs, Hungary), where it was continued. The authors are also thankful for instructive conversations with Przemyslaw  Wojtaszczyk.

The present scientific contribution is dedicated to the $650$th anniversary of the foundation of the University
of P\'ecs, Hungary.

\end{document}